
\documentclass{amsart}
\usepackage{amssymb}
\usepackage{amsthm}
\usepackage[curve,matrix,arrow]{xy}
\textwidth 15cm
  \oddsidemargin 0.7cm
  \evensidemargin 0.7cm
  \textheight 45\baselineskip

\hyphenation{ho-mo-mor-phism}
\hyphenation{ho-mo-mor-phisms}
\hyphenation{endo-mor-phism}
\hyphenation{endo-mor-phisms}

\theoremstyle{plain}\newtheorem{Theorem}{Theorem}[section]
\theoremstyle{plain}
\theoremstyle{plain}\newtheorem{Corollary}[Theorem]{Corollary}
\theoremstyle{plain}\newtheorem{Lemma}[Theorem]{Lemma}
\theoremstyle{plain}\newtheorem{Proposition}[Theorem]{Proposition}
\theoremstyle{plain}
\theoremstyle{definition}
\theoremstyle{definition}
\theoremstyle{definition}
\theoremstyle{definition}
\theoremstyle{definition}\newtheorem{Remark}[Theorem]{Remark}
\theoremstyle{definition}

\newcommand{\HH}{H\!H}






\def\Der{\mathrm{Der}}
\def\dim{\mathrm{dim}}           
\def\End{\mathrm{End}}           

\def\Ext{\mathrm{Ext}}

\def\ker{\mathrm{ker}}           
             
\def\IDer{\mathrm{IDer}}
\def\Im{\mathrm{Im}}

\def\soc{\mathrm{soc}}

\title{Block algebras with $\HH^1$ a simple Lie algebra} 
\author{Markus Linckelmann and Lleonard Rubio y Degrassi} 
 \date{}

\subjclass[2010]{16E40, 16G30, 16D90}

\begin{document}

\begin{abstract}
We show that if $B$ is a block of a finite group algebra $kG$ over an 
algebraically closed field $k$ of prime characteristic $p$ such that
$\HH^1(B)$ is a simple Lie algebra and such that $B$ has a unique
isomorphism class of simple modules, then $B$ is nilpotent with an
elementary abelian defect group $P$ of order at least $3$, and 
$\HH^1(B)$ is in that case isomorphic to the Jacobson-Witt algebra 
$\HH^1(kP)$. In particular, no other simple modular Lie algebras arise 
as $\HH^1(B)$ of a block $B$ with a single isomorphism class of simple 
modules.
\end{abstract}

\maketitle

\section{Introduction}

Let $p$ be a prime and $k$ an algebraically closed field of 
characteristic $p$. The purpose of this note is to illustrate close
connections between the Lie algebra structure of $\HH^1(B)$ and
the structure of $B$, where $B$ is a block of a finite group algebra
$kG$. The main motivation for this is the fact that the Lie algebra
structre of $\HH^1(B)$ is invariant under stable equivalences of 
Morita type (cf. \cite[Theorem 10.7]{KLZ}). We consider two extreme 
cases for a block $B$ with a single isomorphism class of simple 
modules. The first result describes the circumstances under which
$HH^1(B)$ a simple Lie algebra. 

\begin{Theorem} \label{onesimpleLie}
Let $G$ be a finite group and let $B$ be a block algebra of $kG$ having
a unique isomorphism class of simple modules. Then $\HH^1(B)$ is a 
simple Lie algebra if and only if $B$ is nilpotent with an elementary 
abelian defect group $P$ of order at least $3$. In that case, we have 
a Lie algebra isomorphism $\HH^1(B)\cong$ $\HH^1(kP)$.
\end{Theorem}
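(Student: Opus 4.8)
The plan is to prove the two directions separately, with the "if" direction being essentially a computation and the "only if" direction requiring the structural input.

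For the "if" direction, suppose $B$ is nilpotent with elementary abelian defect group $P$ of order $p^n \geq 3$. By Puig's theorem, $B$ is Morita equivalent — indeed splendidly stably equivalent — to $kP$, and since the Lie algebra structure on $\HH^1$ is invariant under stable equivalences of Morita type (cited as \cite[Theorem 10.7]{KLZ}), we get a Lie algebra isomorphism $\HH^1(B) \cong \HH^1(kP)$. So it remains to identify $\HH^1(kP)$ for $P$ elementary abelian of rank $n$ and order $\geq 3$. Here $kP \cong k[x_1,\dots,x_n]/(x_1^p,\dots,x_n^p)$, and $\HH^1(kP) \cong \Der(kP)$ is a free $kP$-module of rank $n$ on the partial derivatives $\partial_i$; this is precisely the Jacobson–Witt algebra $W(n;\underline{1})$, which is simple as a Lie algebra when $p^n \geq 3$ (the one excluded case being $p^n = 2$, i.e. $p=2$, $n=1$, where $kP \cong k[x]/(x^2)$ and $\HH^1(kP)$ is one-dimensional abelian). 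This settles one implication and simultaneously pins down the isomorphism type.

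For the "only if" direction, assume $\HH^1(B)$ is a simple Lie algebra; I must show $B$ is nilpotent with elementary abelian defect group. First, $\HH^1(B)$ is nonzero, and simplicity forces it to be nonabelian, so $\dim_k \HH^1(B) \geq 3$. The key structural constraint is that $B$ has a single isomorphism class of simple modules. I would invoke the results relating $\HH^1(B)$ to the structure of the defect group and fusion system: in particular, when $B$ has one simple module, there is a surjection (or close relationship) from $\HH^1(B)$ onto a Lie algebra built from the hyperfocal/focal data of the fusion system, and the "toral" part coming from $\HH^1(kZ)$ for a central subgroup, together with the fact that $B$ has a normal subgroup–like structure. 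The crucial point is to rule out $\HH^1(B)$ having a nonzero proper ideal: one shows that if the defect group $P$ is nonabelian, or if the fusion system is nontrivial, then $\HH^1(B)$ acquires a nontrivial ideal (for instance the image of $\HH^1$ of the center, or a solvable radical coming from the Morita context with the Brauer correspondent), contradicting simplicity. This should force the fusion system to be trivial — i.e. $B$ nilpotent — and then the Morita equivalence with $kP$ plus the rank-one count on simple modules forces $P$ abelian; abelian plus the shape of $\HH^1(kP)$ being simple forces $P$ elementary abelian of order $\geq 3$ (a non-elementary-abelian abelian $p$-group would give $\HH^1(kP)$ a nontrivial ideal, e.g. coming from the subalgebra generated by $\Omega^1$ of a proper quotient).

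The main obstacle I anticipate is the "only if" direction, specifically showing that simplicity of $\HH^1(B)$ forces the block to be nilpotent: one must convert the purely Lie-theoretic hypothesis (no proper nonzero ideals) into the vanishing of fusion, and the natural ideals to exhibit — images of restriction maps, or the part of $\HH^1(B)$ coming from bimodule endomorphisms — need to be shown genuinely proper and genuinely nonzero in all non-nilpotent cases, which likely requires a careful case analysis according to whether the defect group is abelian and whether $p=2$. I would structure this around first reducing (via the single-simple-module hypothesis and known reduction theorems) to the case where $B$ itself is close to $kP \rtimes E$ for an inertial quotient $E$, and then computing $\HH^1$ of such a twisted group algebra directly to locate the obstructing ideal unless $E = 1$.
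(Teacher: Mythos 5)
The ``if'' direction of your proposal is correct and coincides with the paper's argument: Puig's theorem gives $\HH^1(B)\cong \HH^1(kP)$, and $\HH^1(kP)$ for $P$ elementary abelian of order at least $3$ is the Jacobson--Witt algebra, simple by Jacobson's theorem. The problem is the ``only if'' direction, where your text is a statement of intent rather than a proof, and the intended route does not exist in the current state of knowledge. There is no reduction theorem that brings a general block with one isomorphism class of simple modules to (a twisted group algebra of) $kP\rtimes E$; that is essentially an open structural problem, so you cannot ``compute $\HH^1$ of such a twisted group algebra directly to locate the obstructing ideal.'' Likewise, the claimed ideals of $\HH^1(B)$ coming from fusion/focal data or from the Brauer correspondent are not constructed, and the assertion that nontrivial fusion produces a nonzero proper Lie ideal is exactly the hard content that needs proof. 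A further slip: once you have $B$ nilpotent, the Morita equivalence with $kP$ and the single simple module give no information about $P$ being abelian ($kP$ has one simple module for every $p$-group $P$), so you would still need to show that $\HH^1(kP)$ simple forces $P$ abelian --- which the paper only obtains as a corollary of the full theorem, not as an input.

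The paper's actual argument avoids fusion systems entirely. It splits on the Okuyama--Tsushima criterion: if $J(B)=J(Z(B))B$ then $B$ is nilpotent with abelian defect group, and one concludes with the (elementary) fact that for abelian $P$ the simplicity of $\HH^1(kP)$ forces $P$ elementary abelian of order at least $3$. If $J(Z(B))B\neq J(B)$, one passes to the local symmetric basic algebra $A$ and derives a contradiction purely from derivations: the map $\HH^1(A)\to\HH^1(Z(A))$ has nonzero kernel (using the existence of outer derivations vanishing on $1+J(A)^2$ with image in $\soc(A)$), so by simplicity it is zero; hence every derivation kills $Z(A)$, therefore preserves $J(A)$ and $J(A)^2$; the induced map $\HH^1(A)\to\HH^1(A/J(A)^2)$ is again zero by simplicity, forcing every derivation to land in $J(A)^2$; and the filtration $\Der_{(m)}(A)$ then shows $\Der(A)$ is a nilpotent Lie algebra, contradicting simplicity. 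None of these steps appears in your proposal, so the ``only if'' direction remains a genuine gap.
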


Theorem \ref{onesimpleLie} implies in particular that no simple modular 
Lie algebras other than the Jacobson-Witt algebras occur as $\HH^1(B)$ 
of some block algebra of a finite group with the property that $B$ has 
a single isomorphism class of simple modules. See \cite{StradeI}, 
\cite{StradeII}  for details and further references on the 
classification of simple Lie algebras in positive characteristic. We do 
not know whether the hypothesis on $B$ to have a single isomorphism 
class of simple modules is necessary in Theorem \ref{onesimpleLie}. For 
the sake of completeness, the second result rules out the case of the 
trivial one-dimensional Lie algebra for blocks with one isomorphism 
class of simple modules.

\begin{Theorem} \label{onetrivialLie}
Let $G$ be a finite group and let $B$ be a block algebra of $kG$ having
a nontrivial defect group and a unique isomorphism class of simple 
modules. Then $\dim_k(\HH^1(B))\geq$ $2$.
\end{Theorem}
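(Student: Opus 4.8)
The plan is to pass to the basic algebra $A$ of $B$ and to produce two linearly independent outer derivations of $A$. First, since $B$ has a single isomorphism class of simple modules, $B$ is Morita equivalent to $A$, which is a split local finite-dimensional $k$-algebra, and symmetric because $B$ is; as $\HH^1$ is a Morita invariant, it suffices to prove $\dim_k\HH^1(A)\geq2$. Since $P$ is nontrivial, $B$ is not semisimple, so $A\neq k$; moreover the Cartan matrix of $B$ is the $1\times1$ matrix $(\dim_k A)$, and by Brauer's theorem on Cartan invariants its unique elementary divisor --- hence $\dim_k A$ itself --- equals $|P|=p^d$ with $d\geq1$. Write $J=\rad(A)$; then $\HH^1(A)=\Der(A)/\IDer(A)$, and since $A$ is local and self-injective $\soc(A)$ is one-dimensional, say $\soc(A)=kz$, while, $A$ being symmetric, $\soc(A)\subseteq Z(A)$. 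In particular $\IDer(A)\cong A/Z(A)$ annihilates $\soc(A)$.

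Next I would construct derivations supported on the socle. Choose $x_1,\dots,x_r$ in $J$ lifting a $k$-basis of $J/J^2$, so that $r=\dim_k J/J^2\geq1$, and present $A$ as a quotient of the free algebra on $x_1,\dots,x_r$; then a minimal set of defining relations lies in the square of the augmentation ideal of that free algebra. Since $zJ=Jz=0$, for each $i$ the rule $x_j\mapsto\delta_{ij}\,z$ extends by the Leibniz rule to a well-defined derivation $\delta_i$ of $A$, because each relation --- a sum of words of length at least two --- is sent to $0$. The $\delta_i$ are $k$-linearly independent, have image in $\soc(A)$, and annihilate $J^2$; in particular $\dim_k\Der(A)\geq r$.

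If $A$ is commutative, then $\IDer(A)=0$, so $\HH^1(A)=\Der(A)$. If $r\geq2$, the classes of $\delta_1$ and $\delta_2$ already give $\dim_k\HH^1(A)\geq2$. If $r=1$, then $A\cong k[x]/(x^n)$ with $n=\dim_k A=p^d$, and, as $p\mid n$, the constraint $n\,x^{n-1}d(x)=0$ on a derivation is vacuous, so $\Der(A)=A\,\partial_x$ has dimension $n=p^d\geq2$. This disposes of the commutative case --- in particular of the case in which $B$ is nilpotent with abelian defect group.

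Finally, suppose $A$ is non-commutative; then $r\geq2$ automatically, since $r=1$ would force $A\cong k[x]/(x^n)$. It remains to show that at least two of the classes $[\delta_1],\dots,[\delta_r]$ are linearly independent in $\HH^1(A)$. The obstruction is measured by $Z_2(A)/Z(A)$, where $Z_2(A)=\{a\in A:[a,A]\subseteq\soc(A)\}$: an inner derivation with image in $\soc(A)$ comes from an element of $Z_2(A)$, and the assignment sending $a$ to the restriction of $[a,-]$ to $J/J^2$ gives an injection $Z_2(A)/Z(A)\hookrightarrow\Hom_k(J/J^2,k)$, so that $\dim_k\HH^1(A)\geq r-\dim_k\bigl(Z_2(A)/Z(A)\bigr)$; any shortfall should be recovered from derivations whose image is not contained in $\soc(A)$, through the long exact sequence of Hochschild cohomology attached to $0\to\soc(A)\to A\to A/\soc(A)\to0$. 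I expect this balancing to be the crux of the proof. It is painless when $B$ is nilpotent, for then $A\cong kP$ and $\dim_k\HH^1(kP)\geq\dim_k H^1(P,k)=\mathrm{rk}\bigl(P/\Phi(P)\bigr)\geq2$ unless $P$ is cyclic, in which case $kP\cong k[x]/(x^{|P|})$ and $\dim_k\HH^1(kP)=|P|\geq2$ as in the previous paragraph; but for a non-nilpotent block with a unique isomorphism class of simple modules one must force the bound, and I would attempt this by exploiting the symmetry of $A$ (the self-duality of $A$ as an $A$-bimodule) together with the divisibility $p\mid\dim_k A$, and, if needed, structural information on the source algebra of $B$.
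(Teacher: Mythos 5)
Your reduction to the basic algebra, your construction of the socle-valued derivations $\delta_1,\dots,\delta_r$, and your treatment of the commutative case are all correct; in fact your handling of $r=1$ via the Cartan matrix ($\dim_k A=|P|=p^d$, so $\Der(k[x]/(x^{p^d}))\cong A\,\partial_x$ has dimension $p^d\geq 2$) is arguably more self-contained than the paper's, which instead argues that a uniserial basic algebra forces a cyclic defect group, hence (with one simple module) a nilpotent block, hence $A\cong kP$. But there is a genuine gap exactly where you say you expect the crux to be: in the non-commutative, non-nilpotent case you never prove that two of the classes $[\delta_i]$ survive in $\HH^1(A)$, i.e.\ that $\dim_k\bigl(Z_2(A)/Z(A)\bigr)\leq r-2$. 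The paper closes this by quoting \cite[Corollary 3.2]{BKL} (Corollary \ref{soclederivationlocal} above), which asserts for a \emph{local symmetric} algebra that every nonzero linear map vanishing on $k\cdot 1+J(A)^2$ with image in $\soc(A)$ is an \emph{outer} derivation, and hence that $\dim_k(J(A)/J(A)^2)\leq\dim_k(\soc_{Z(A)}(\HH^1(A)))$; this gives $\dim_k\HH^1(A)\geq r$ in one stroke, leaving only $r=1$ to be handled separately. Without this input your argument does not rule out that all but one of the $\delta_i$ become inner.

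The missing step is, however, exactly the "exploitation of symmetry" you gesture at, and it can be done in one line: if $c\in Z_2(A)$, write $[c,a]=\lambda(a)z$ with $\soc(A)=kz$ and apply a symmetrizing form $s$; then $\lambda(a)s(z)=s(ca)-s(ac)=0$, and $s(z)\neq 0$ because $\ker(s)$ contains no nonzero ideal while $\soc(A)$ is one, so $\lambda=0$ and $Z_2(A)=Z(A)$. Had you included this, your proof would be complete and essentially independent of \cite{BKL}; as written, the non-commutative case rests on an unproven claim, and the appeal to "structural information on the source algebra" is not needed and should be dropped.
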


The hypothesis that $B$ has a single isomorphism class of simple
modules is necessary in Theorem \ref{onetrivialLie}; for instance,
if $P$ is cyclic of order $p\geq$ $3$ and if $E$ is the cyclic 
automorphism group of order $p-1$ of $P$, then $\HH^1(k(P\rtimes E))$ 
has dimension one. 

\section{Quoted results}

We collect in this section results needed for the proof of
Theorem \ref{onesimpleLie}.

\begin{Theorem}[Okuyama and Tsushima \cite{OkTs}] 
\label{abeliandefectnilpotent}
Let $G$ be a finite group and $B$ a block algebra of $kG$. Then $B$ is 
a nilpotent block with an abelian defect group if and only if $J(B)=$ 
$J(Z(B))B$.
\end{Theorem}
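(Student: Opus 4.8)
The plan is to prove both implications by reducing everything to the basic algebra $A = iBi$ of $B$, where $i$ is a primitive idempotent, through the chain of equivalences: $J(B) = J(Z(B))B$ holds if and only if $A$ is commutative, and $A$ is commutative if and only if $B$ is nilpotent with abelian defect group $P$ (and then $A \cong kP$).

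First I would carry out the ring-theoretic reduction. As $J(Z(B))$ is a nilpotent ideal of $Z(B)$, the ideal $J(Z(B))B$ is nilpotent, hence contained in $J(B)$; so the content of the identity is the reverse inclusion. Assuming it, $B/J(B) = B/J(Z(B))B$ is semisimple, and since $B$ is indecomposable and passing to the quotient by a nilpotent ideal does not change the number of blocks, $B/J(B)$ is indecomposable, hence $B/J(B) \cong \mathrm{Mat}_n(k)$. So $B$ has one simple module, $B \cong \mathrm{Mat}_n(A)$, and $A$ is local with residue field $k$. Conjugating the identity by $i$ and using $Z(B) \cong Z(A)$ (Morita invariance) and the centrality of $J(Z(B))$ gives $J(A) = J(Z(A))A$; then $A = k\cdot 1 + J(A) = Z(A) + J(Z(A))A$, so the finitely generated $Z(A)$-module $A/Z(A)$ satisfies $A/Z(A) = J(Z(A))\cdot (A/Z(A))$, and Nakayama's lemma forces $A = Z(A)$, i.e. $A$ commutative. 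Conversely, if $A$ is commutative it is local with residue field $k$ (being indecomposable commutative Artinian), whence $B \cong \mathrm{Mat}_n(A)$, $Z(B) = A\cdot\mathrm{Id}_n$ and $J(Z(B))B = \mathrm{Mat}_n(J(A)A) = \mathrm{Mat}_n(J(A)) = J(B)$.

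Next I would treat the block-theoretic equivalence. If $B$ is nilpotent with abelian defect group $P$, Puig's structure theorem for nilpotent blocks gives $B \cong \mathrm{Mat}_n(kP)$; since $P$ is abelian, $Z(B) = kP\cdot\mathrm{Id}_n$ and $J(Z(B))B = \mathrm{Mat}_n(J(kP)\cdot kP) = \mathrm{Mat}_n(J(kP)) = J(B)$, so the identity holds (and $A$ is commutative). Conversely, suppose $A$ is commutative. Then $\dim_k A$ equals the single Cartan invariant of $B$, which is the unique elementary divisor of the $1\times 1$ Cartan matrix and therefore equals $|P|$. I would then pass to a source algebra: for a primitive idempotent $j$ of $B^P$ with $\Br_P(j) \neq 0$, the algebra $C = jBj$ is Morita equivalent to $B$, hence isomorphic to $\mathrm{Mat}_r(A)$ for some $r$, and it is an interior $P$-algebra whose structural homomorphism $kP \to C$ is injective. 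The objective is then to promote this to an isomorphism $C \cong \mathrm{Mat}_r(kP)$ of interior $P$-algebras; granting it, $A \cong kP$ (so $P$ is abelian) and $B$ is nilpotent by Puig's description of nilpotent blocks via their source algebras. Combining the two equivalences completes the proof.

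The hard part will be exactly that last step: showing that commutativity of the basic algebra, together with $\dim_k A = |P|$, forces the \emph{interior $P$-algebra} structure of a source algebra of $B$ to be the standard one on $\mathrm{Mat}_r(kP)$. This requires using the local structure of $B$ directly — for instance, running through the $B$-Brauer pairs $(Q,e_Q)$ and showing that commutativity of $A$ forces the fusion system of $B$ to be trivial, so that $B$ is nilpotent, and forces the defect group to be abelian — and it is the part that genuinely depends on Puig's theory of source algebras and nilpotent blocks. The ring-theoretic reduction above and the reverse implication are then formal. A more self-contained alternative for the hard step would be to compute the lower defect group multiplicities of $Z(B)$ and match them against those of $kP$.
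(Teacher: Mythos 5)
This statement is quoted in the paper from Okuyama--Tsushima without proof, so there is no internal argument to compare against; judged on its own terms, your proposal is incomplete in its crucial direction. The implication $J(B)=J(Z(B))B\ \Rightarrow\ B$ nilpotent with abelian defect group is precisely the content you label ``the hard part'' and leave open: reducing the theorem to the assertion ``a block with commutative basic algebra is nilpotent with abelian defect group'' and then sketching two possible strategies (promoting the source algebra to $\mathrm{Mat}_r(kP)$ as an interior $P$-algebra, or computing lower defect group multiplicities) is a plan, not a proof, and that residual assertion is essentially as deep as the theorem itself. This is where Okuyama and Tsushima do the real work, via local analysis with the Brauer homomorphism and the Brou\'e--Puig characterization of nilpotency; nothing in your write-up substitutes for it. The converse direction (nilpotent with abelian defect group implies the identity, via Puig's structure theorem $B\cong \mathrm{Mat}_n(kP)$) is fine.

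There is also a concrete error in your ring-theoretic reduction. The claim that ``passing to the quotient by a nilpotent ideal does not change the number of blocks'' is false: for the algebra $A$ of upper triangular $2\times 2$ matrices over $k$, $A$ is indecomposable while $A/J(A)\cong k\times k$ has two blocks. In general the number of blocks can only \emph{increase} under such a quotient (nonzero central idempotents cannot die in a quotient by a nil ideal, but primitive ones may split), which is the wrong direction for your purposes. The conclusion $l(B)=1$ is nevertheless correct and follows from the Wedderburn--Malcev decomposition together with the Nakayama argument you already deploy one sentence later: write $B=E\oplus J(B)$ with $E$ a maximal semisimple subalgebra; the hypothesis gives $B=E+J(Z(B))B$, hence $B=E\cdot Z(B)$ by Nakayama applied to the finitely generated $Z(B)$-module $B/(E\cdot Z(B))$. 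Thus $B$ is a quotient of $E\otimes_k Z(B)\cong \prod_i \mathrm{Mat}_{n_i}(Z(B))$; since the primitive central idempotents of the factors map to nonzero orthogonal central idempotents of the indecomposable algebra $B$, there is only one factor. This yields $E\cong \mathrm{Mat}_n(k)$, $B\cong\mathrm{Mat}_n(Z(B))$, and hence both $l(B)=1$ and the commutativity of the basic algebra in one step, making your separate Nakayama computation on $A=iBi$ redundant once the faulty lifting claim is removed.
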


Let $A$ be a finite-dimensional (associative and unital) $k$-algebra. 
A derivation on $A$ is a $k$-linear map $f : A\to$ $A$ satisfying 
$f(ab)=$ $f(a)b+af(b)$ for all $a$, $b\in$ $A$. The set $\Der(A)$ of 
derivations on $A$ is a Lie subalgebra of $\End_k(A)$, with respect to 
the Lie bracket $[f,g]=$ $f\circ g- g\circ f$, for any $f$, $g\in$ 
$\End_k(A)$. For $c\in$ $A$, the map sending $a\in$ $A$ to the additive 
commutator $[c,a]=$ $ca-ac$ is a derivation on $A$; any derivation 
arising this way is called an {\it inner derivation on} $A$. The set 
$\IDer(A)$ of inner derivations is a Lie ideal in $\Der(A)$, and we 
have a canonical identification $\HH^1(A)\cong$ $\Der(A)/\IDer(A)$. 
See \cite[Chapter 9]{Weibel} for more details on Hochschild cohomology.
If $A$ is commutative, then $HH^1(A)\cong$ $\Der(A)$. A $k$-algebra $A$
is {\it symmetric} if $A$ is isomorphic to its $k$-dual $A^*$ as an
$A$-$A$-bimodule; this definition implies that $A$ is 
finite-dimensional.

\begin{Theorem}[{\cite[Theorem 3.1]{BKL}}] \label{soclederivation}
Let $A$ be a symmetric $k$-algebra and let $E$ be a maximal semisimple 
subalgebra. Let $f \colon A\to A$ be an $E$-$E$-bimodule homomorphism 
satisfying $E+J(A)^2\subseteq$ $\ker(f)$ and $\Im(f)\subseteq$ 
$\soc(A)$. Then $f$ is a derivation on $A$ in $\soc_{Z(A)}(\Der(A))$, 
and if $f\neq$ $0$, then $f$ is an outer derivation of $A$. In 
particular, we have
$$\sum_S\ \dim_k(\Ext^1_A(S,S)) \leq \dim_k(\soc_{Z(A)}(\HH^1(A)))$$
where in the sum $S$ runs over a set of representatives of the 
isomorphism classes of simple $A$-modules.
\end{Theorem}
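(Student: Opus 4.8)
The plan is to work throughout with the Wedderburn--Malcev decomposition $A = E \oplus J$, where $J = J(A)$, and with a symmetrizing form $\lambda \colon A \to k$; recall that $\lambda(uv) = \lambda(vu)$ for all $u, v \in A$, that $\soc(A) = J^\perp$ (so $J\cdot\soc(A) = \soc(A)\cdot J = 0$ and $\dim_k\soc(A) = \dim_k(A/J)$), and that $[A,A]^\perp = Z(A)$. Since $f$ is an $E$-$E$-bimodule map that kills $E \oplus J^2$ and has image in $\soc(A)$, it factors through an $E$-$E$-bimodule map $\bar f \colon J/J^2 \to \soc(A)$. To check that $f$ is a derivation I would simply expand $f(ab)$ for $a = e + x$ and $b = e' + y$ with $e, e' \in E$ and $x, y \in J$: bilinearity together with $f$ vanishing on $E \oplus J^2$ gives $f(ab) = e\,f(y) + f(x)\,e'$, whereas in $f(a)b + af(b)$ the two extra summands $f(x)y$ and $xf(y)$ lie in $\soc(A)\cdot J = J\cdot\soc(A) = 0$. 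That $f$ lies in $\soc_{Z(A)}(\Der(A))$ is then immediate, since for $z \in J(Z(A)) \subseteq J$ one has $(zf)(a) = z\,f(a) \in J\cdot\soc(A) = 0$.

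The heart of the argument, and the step I expect to be the main obstacle, is that $f$ is an \emph{outer} derivation whenever $f \neq 0$; here I would argue with the form rather than with structure theory. Suppose $f = [c,-]$ is inner. For all $a, b \in A$, associativity and the cyclic symmetry of $\lambda$ give
$$\lambda\big(c\,[a,b]\big) = \lambda\big([c,a]\,b\big) = \lambda\big(f(a)\,b\big) .$$
If $b \in J$, then $f(a)b \in \soc(A)\cdot J = 0$, so $c \perp [A,J]$; if $b = e \in E$, then $f(a)e = f(ae)$ by right $E$-linearity of $f$ and $\lambda(f(ae)) = \lambda([c,ae]) = 0$ by symmetry of $\lambda$, so $c \perp [A,E]$. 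As $A = E + J$ we get $\lambda(c\,[A,A]) = 0$, hence $c \in [A,A]^\perp = Z(A)$ and $f = 0$. This establishes the first assertion of the theorem.

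For the inequality, let $V$ denote the space of all maps $f$ as in the statement, identified via the factorisation above with $\Hom_{E\text{-}E}(J/J^2, \soc(A))$. By the two previous paragraphs the composite $V \hookrightarrow \Der(A) \twoheadrightarrow \HH^1(A)$ is injective with image inside $\soc_{Z(A)}(\HH^1(A))$, so $\dim_k V \leq \dim_k\soc_{Z(A)}(\HH^1(A))$. It then remains to compare $\dim_k V$ with $\sum_S \dim_k\Ext^1_A(S,S)$. Both $J/J^2$ and $\soc(A)$ are semisimple $E$-$E$-bimodules; writing $A/J \cong \prod_i M_{n_i}(k)$ and letting $W_{ij}$ be the simple $E$-$E$-bimodule concentrated on the $(i,j)$ pair of factors, with multiplicity functions $m_{ij}(-)$, one has $\dim_k V = \sum_{i,j} m_{ij}(J/J^2)\,m_{ij}(\soc(A))$. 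A routine computation with minimal projective resolutions gives $m_{ii}(J/J^2) = \dim_k\Ext^1_A(S_i,S_i)$, and, $A$ being symmetric, there is an $E$-$E$-bimodule isomorphism $\soc(A) \cong (A/J)^* \cong A/J$, so $m_{ij}(\soc(A)) = \delta_{ij}$; hence in fact $\dim_k V = \sum_S \dim_k\Ext^1_A(S,S)$. (For the inequality by itself one needs only $m_{ii}(\soc(A)) \geq 1$, which follows from $\soc(Ae) \neq 0$ for a primitive idempotent $e$ in the $i$-th factor together with the fact that the unit of that factor acts as the identity on $S_i$.) Combining the two inequalities yields $\sum_S \dim_k\Ext^1_A(S,S) = \dim_k V \leq \dim_k\soc_{Z(A)}(\HH^1(A))$.
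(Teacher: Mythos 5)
Your argument is correct and complete. Note that this paper does not actually prove the statement --- it is quoted from \cite[Theorem 3.1]{BKL} --- so there is no in-text proof to compare against; judged on its own, your proof stands. The three steps all check out: the expansion of $f(ab)$ along $A=E\oplus J(A)$ uses exactly the hypotheses ($E$-$E$-linearity kills the cross terms, $J(A)^2\subseteq\ker f$ kills $f(xy)$, and $\soc(A)\cdot J(A)=J(A)\cdot\soc(A)=0$ kills the two stray summands); the outerness argument via $\lambda(c[a,b])=\lambda(f(a)b)$ and $[A,A]^\perp=Z(A)$ is a clean way to get $c\in Z(A)$, and correctly splits $b=e+y$ so that the $J$-part dies against $\soc(A)$ and the $E$-part dies because $\lambda$ vanishes on commutators; and the dimension count via $V\cong\Hom_{E\text{-}E}(J(A)/J(A)^2,\soc(A))$ together with $\soc(A)\cong(A/J(A))^*\cong A/J(A)$ as $E$-$E$-bimodules (valid since $A$ is symmetric and $A/J(A)$ is split semisimple, hence symmetric) gives the stated inequality, in fact as an equality between $\dim_k V$ and $\sum_S\dim_k\Ext^1_A(S,S)$. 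The only hypotheses you lean on implicitly --- that $k$ is algebraically closed so that the simple $E$-$E$-bimodules have trivial endomorphism rings and $\dim_k\Ext^1_A(S_i,S_j)$ equals the bimodule multiplicity --- are standing assumptions of the paper, so nothing is missing.
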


\begin{Corollary}[{\cite[Corollary 3.2]{BKL}}] 
\label{soclederivationlocal}
Let $A$ be a local symmetric $k$-algebra. Let $f \colon A\to A$ 
be a $k$-linear map satisfying $1+J(A)^2\subseteq$ $\ker(f)$ and 
$\Im(f)\subseteq$ $\soc(A)$. Then $f$ is a derivation on $A$ in 
$\soc_{Z(A)}(\Der(A))$, and if $f\neq$ $0$, then $f$ is an outer 
derivation of $A$. In particular, we have
$$\dim_k(J(A)/J(A)^2) \leq \dim_k(\soc_{Z(A)}(\HH^1(A)))\ .$$
\end{Corollary}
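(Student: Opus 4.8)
The plan is to deduce the Corollary from Theorem \ref{soclederivation} by taking for $E$ the one-dimensional subalgebra $k\cdot 1_A$. Since $A$ is local and $k$ is algebraically closed, $A/J(A)\cong k$, so $E=k\cdot 1_A$ is a maximal semisimple subalgebra of $A$, and $E+J(A)^2=k\cdot 1_A+J(A)^2$; thus the hypotheses $1+J(A)^2\subseteq\ker(f)$ and $\Im(f)\subseteq\soc(A)$ of the Corollary are precisely the hypotheses of Theorem \ref{soclederivation} for this choice of $E$. The one point worth noting is that the $k$-linear map $f$ is automatically an $E$-$E$-bimodule homomorphism: since $E=k\cdot 1_A$ acts on $A$ by scalars on either side, $f(\lambda a)=\lambda f(a)=f(a\lambda)$ for all $\lambda\in k$ and $a\in A$. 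Hence Theorem \ref{soclederivation} applies and yields that $f$ is a derivation on $A$ lying in $\soc_{Z(A)}(\Der(A))$, and that $f$ is an outer derivation whenever $f\neq 0$.

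For the displayed inequality, observe that since $A$ is local it has a single isomorphism class of simple modules, represented by $S=A/J(A)\cong k$, so the sum on the left of the inequality in Theorem \ref{soclederivation} reduces to the single term $\dim_k(\Ext^1_A(S,S))$, and it remains to identify this with $\dim_k(J(A)/J(A)^2)$. This is routine: applying $\Hom_A(-,k)$ to the short exact sequence $0\to J(A)\to A\to k\to 0$ of left $A$-modules gives an exact sequence
$$0\to \Hom_A(k,k)\to \Hom_A(A,k)\to \Hom_A(J(A),k)\to \Ext^1_A(k,k)\to \Ext^1_A(A,k)=0\ ,$$
in which the first map is an isomorphism of one-dimensional spaces; moreover every $A$-homomorphism $J(A)\to k$ annihilates $J(A)^2$ and hence factors through $J(A)/J(A)^2$, and $\Hom_A(J(A)/J(A)^2,k)=\Hom_k(J(A)/J(A)^2,k)$ has dimension $\dim_k(J(A)/J(A)^2)$. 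Substituting $\dim_k(\Ext^1_A(S,S))=\dim_k(J(A)/J(A)^2)$ into the inequality of Theorem \ref{soclederivation} gives the claim.

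I do not expect a genuine obstacle here: the Corollary is a direct specialisation of Theorem \ref{soclederivation}, and the only mildly technical ingredient is the standard identity $\dim_k(\Ext^1_A(k,k))=\dim_k(J(A)/J(A)^2)$ for a finite-dimensional local $k$-algebra with residue field $k$. One could instead bypass Theorem \ref{soclederivation} and argue directly, constructing explicit $k$-linear maps $A\to\soc(A)$ vanishing on $1+J(A)^2$ and checking by hand that they are derivations and outer; but this merely reproves the local case of the theorem, so the deduction above is the cleaner route.
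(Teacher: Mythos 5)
Your proposal is correct and is exactly the intended derivation: the paper quotes this statement from \cite{BKL} without proof, as the specialisation of Theorem \ref{soclederivation} to a local algebra, where the unique maximal semisimple subalgebra is $E=k\cdot 1_A$ (so every $k$-linear map is automatically an $E$-$E$-bimodule homomorphism) and the sum over simple modules collapses to the single term $\dim_k(\Ext^1_A(k,k))=\dim_k(J(A)/J(A)^2)$. Your verification of that last identity via the long exact sequence for $0\to J(A)\to A\to k\to 0$ is also correct.
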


\begin{Theorem}[{Jacobson \cite[Theorem 1]{Jac43}}] \label{kPLiesimple}
Let $P$ be a finite elementary abelian $p$-group of order at least $3$.
Then $\HH^1(kP)$ is a simple Lie algebra.
\end{Theorem}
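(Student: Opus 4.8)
Since $kP$ is commutative, $\HH^1(kP)=\Der(kP)$, so the plan is to compute this algebra explicitly. Writing $|P|=p^n$ and choosing a basis $g_1,\dots,g_n$ of $P$, the assignment $x_i\mapsto g_i-1$ identifies $kP$ with the truncated polynomial algebra $A=k[x_1,\dots,x_n]/(x_1^p,\dots,x_n^p)$. A derivation of $A$ is determined by its values on $x_1,\dots,x_n$, and any such values occur, since $\partial(x_i^p)=p\,x_i^{p-1}\partial(x_i)=0$ holds automatically in characteristic $p$; hence $\Der(A)=\bigoplus_{i=1}^n A\partial_i$ is free of rank $n$ over $A$, where $\partial_i=\partial/\partial x_i$. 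This Lie algebra is the Jacobson--Witt algebra $W$, of dimension $np^n$, and the assertion is precisely Jacobson's theorem that $W$ is simple when $p^n\ge 3$. One could simply invoke \cite{Jac43} (see also the treatment of $W(n;\underline{1})$ in \cite{StradeI}); I would reprove it along the following lines.

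Grade $A$ by total degree, $A=\bigoplus_{m=0}^{n(p-1)}A_m$, inducing $W=\bigoplus_{j=-1}^{n(p-1)-1}W_j$ with $W_j=\bigoplus_i A_{j+1}\partial_i$; thus $[W_j,W_l]\subseteq W_{j+l}$, the piece $W_{-1}=\bigoplus_i k\partial_i$ is an abelian subalgebra, $W_0=\bigoplus_{i,\ell}k\,x_i\partial_\ell\cong\mathfrak{gl}_n$ acts naturally (and irreducibly) on $A_1=\bigoplus_i kx_i$ and on $W_{-1}$, and the Euler element $E=\sum_i x_i\partial_i\in W_0$ satisfies $[E,D]=jD$ for $D\in W_j$. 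Let $I$ be a nonzero ideal of $W$. The first and decisive step is to get $W_{-1}$ into $I$. Given $0\ne D\in I$ with lowest nonzero homogeneous component $D_{j_0}=\sum_\ell f_\ell\partial_\ell$, $f_\ell\in A_{j_0+1}$ not all zero: if $j_0\ge 0$ then some $f_\ell$ is non-constant, so $[\partial_i,D_{j_0}]=\sum_\ell\partial_i(f_\ell)\partial_\ell\ne 0$ for suitable $i$, whence $[\partial_i,D]\in I$ is nonzero with strictly smaller bottom degree; iterating produces an element of $I$ with nonzero $W_{-1}$-component, so $\{D_{-1}:D\in I\}$ is a nonzero $\mathfrak{gl}_n$-submodule of the irreducible module $W_{-1}$ and hence equals $W_{-1}$. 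A further, more delicate manipulation then places an actual nonzero element of $W_{-1}$ inside $I$; since $I\cap W_{-1}$ is a $\mathfrak{gl}_n$-submodule of $W_{-1}$, this forces $W_{-1}\subseteq I$. From here the argument is short: $[\partial_i,x_ix_\ell\partial_m]=x_\ell\partial_m$ for $i\ne\ell$ gives $W_0\subseteq I$ (when $n\ge2$; the case $n=1$, where then $p\ge3$, is settled directly by inspecting the graded ideals, using $[x^{i+1}\partial,x^{j+1}\partial]=(j-i)x^{i+j+1}\partial$), and then for every basis vector $x^a\partial_\ell$ with $a\ne e_\ell$ one can choose a diagonal $h=\sum_i c_ix_i\partial_i\in W_0\subseteq I$ with $[h,x^a\partial_\ell]=\big(\sum_i c_ia_i-c_\ell\big)x^a\partial_\ell$ and nonzero scalar, so $x^a\partial_\ell\in I$; as the remaining basis vectors $x_\ell\partial_\ell$ lie in $W_0\subseteq I$, this yields $I=W$.

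The hypothesis $p^n\ge 3$ is used only to discard the case $(p,n)=(2,1)$, where $A=k[x]/(x^2)$ and $W=k\partial\oplus kx\partial$ with $[x\partial,\partial]=\partial$ is the two-dimensional solvable (hence non-simple) Lie algebra. The main obstacle is exactly the step of inserting a nonzero element of $W_{-1}$ into $I$: since $I$ need not be homogeneous and the degrees $-1,\dots,n(p-1)-1$ fail to be distinct modulo $p$ once $n\ge 2$, one cannot split elements of $I$ into their graded components, so this reduction must be carried out by hand; everything downstream of it is routine.
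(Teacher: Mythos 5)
The paper does not prove this statement at all --- it is quoted from Jacobson's 1943 paper as a black box --- so there is no internal argument to compare yours against; I will assess your sketch on its own terms. The frame is the standard one and is correct: $\HH^1(kP)=\Der(kP)$ by commutativity, the substitution $g_i-1\mapsto x_i$ identifies $kP$ with $k[x_1,\dots,x_n]/(x_1^p,\dots,x_n^p)$, its derivation algebra is the Jacobson--Witt algebra $W=\bigoplus_i A\partial_i$ of dimension $np^n$, and the grading, the copy of $\mathfrak{gl}_n$ in degree $0$, the irreducibility of $W_{-1}$ as a $W_0$-module, and the endgame (from $W_{-1}\subseteq I$ to $W_0\subseteq I$ via $[\partial_i,x_ix_\ell\partial_m]=x_\ell\partial_m$, then to $I=W$ via diagonal elements) are all sound. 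You also correctly isolate $(p,n)=(2,1)$ as the unique degenerate case, and your remark that for $n=1$, $p\ge 3$ the $\mathrm{ad}(E)$-eigenvalues $-1,0,\dots,p-2$ are pairwise distinct in $\F_p$, so that ideals are automatically graded, is exactly why that case is easy.

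The one genuine gap is the step you yourself flag and then defer: getting an actual nonzero element of $W_{-1}$ into the ideal $I$. Your inductive scheme (bracket with $\partial_i$ to lower the bottom degree) only controls the lowest homogeneous component, hence only shows that the \emph{projection} of $I$ onto $W_{-1}$ is all of $W_{-1}$ --- strictly weaker than $I\cap W_{-1}\neq 0$, since $I$ need not be graded for $n\ge 2$. The fix is short and should replace the phrase ``a further, more delicate manipulation'': take $0\neq D=\sum_{b,\ell}c_{b,\ell}\,x^b\partial_\ell\in I$ and let $a$ be a multi-index of \emph{maximal total degree} among those with some $c_{a,\ell}\neq 0$. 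Since $\mathrm{ad}(\partial_j)(x^b\partial_\ell)=b_j x^{b-e_j}\partial_\ell$, applying $\mathrm{ad}(\partial_1)^{a_1}\cdots\mathrm{ad}(\partial_n)^{a_n}$ sends $x^b\partial_\ell$ to $\bigl(\prod_j b_j(b_j-1)\cdots(b_j-a_j+1)\bigr)x^{b-a}\partial_\ell$, which vanishes unless $b_j\ge a_j$ for all $j$; by maximality of $|a|$ the only surviving term has $b=a$, and the result is $\bigl(\prod_j a_j!\bigr)\sum_\ell c_{a,\ell}\partial_\ell$, nonzero because $a_j\le p-1$ forces $a_j!\not\equiv 0 \pmod p$. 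This lands a nonzero vector of $W_{-1}$ in $I$ in one stroke, and irreducibility of $W_{-1}$ under $W_0$ then yields $W_{-1}\subseteq I$ as you intended. With that paragraph inserted your proof closes up; simply citing Jacobson, as the paper does, is of course also legitimate.
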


The converse to this theorem holds as well.

\begin{Proposition} \label{LiesimpleP}
Let $P$ be a finite abelian $p$-group. If $\HH^1(kP)$ is a simple Lie 
algebra, then $P$ is elementary abelian of order at least $3$.
\end{Proposition}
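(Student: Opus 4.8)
The plan is to prove the contrapositive: assuming $P$ is \emph{not} elementary abelian of order at least $3$, I will show that $\HH^1(kP)=\Der(kP)$ — these agree since $kP$ is commutative — is not a simple Lie algebra. Two degenerate cases are cleared by hand: if $P$ is trivial then $\Der(kP)=0$, which is not simple; and if $|P|=2$ then $kP\cong k[x]/(x^2)$, and a direct computation gives $\dim_k\Der(kP)=2$ with the derivation $x\mapsto 1$ spanning a proper nonzero Lie ideal, so $\Der(kP)$ is not simple. The substantive case is when $P$ is not elementary abelian.

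In that case I would set up coordinates: write $P=\langle g_1\rangle\times\cdots\times\langle g_r\rangle$ with $g_i$ of order $p^{a_i}$, put $x_i=g_i-1$, so that $kP\cong k[x_1,\dots,x_r]/(x_1^{p^{a_1}},\dots,x_r^{p^{a_r}})$, and reorder so $a_1\geq 2$. Since each exponent $p^{a_i}$ is divisible by $p$, the relations have vanishing differential, so $\Der(kP)$ is the free $kP$-module on the partial derivatives $\partial_1,\dots,\partial_r$ (this I would either quote or extract from the computation of $\Omega_{kP/k}$). The heart of the argument is then the following claim: $\mathfrak{n}=x_1^p\cdot kP$ is a nonzero proper ideal of $kP$ that is stable under every derivation. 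It is nonzero because $p<p^{a_1}$, proper because $\mathfrak{n}\subseteq J(kP)$, and differential because $\partial_1(x_1^p c)=p\,x_1^{p-1}c+x_1^p\,\partial_1(c)=x_1^p\,\partial_1(c)$ — the characteristic-$p$ cancellation is the whole point — while $\partial_j(x_1^p c)=x_1^p\,\partial_j(c)$ for $j\neq 1$, and an arbitrary derivation is a $kP$-combination of the $\partial_j$.

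Finally I would invoke the identity $[f,bg]=f(b)\,g+b\,[f,g]$ for derivations $f,g$ and $b\in kP$: when $b\in\mathfrak{n}$, both summands lie in $\mathfrak{n}\cdot\Der(kP)$, the first because $f(b)\in\mathfrak{n}$ by the differential-ideal property, the second because $\mathfrak{n}$ is an ideal. Hence $I:=\mathfrak{n}\cdot\Der(kP)=\bigoplus_i\mathfrak{n}\,\partial_i$ is a Lie ideal of $\Der(kP)$; it is nonzero because $\Der(kP)$ is free and $\mathfrak{n}\neq 0$, and proper because $\partial_1\notin I$ (as $1\notin\mathfrak{n}$). Thus $\Der(kP)$ is not simple. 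The step I expect to require the most care is the verification that $\mathfrak{n}$ is a differential ideal: the naive candidate $J(kP)$ fails (for instance $\partial_1(x_1)=1\notin J(kP)$, which is precisely why $\HH^1(k\mathbb{Z}/p)$ can be simple), so the ideal has to be chosen small enough for the characteristic-$p$ cancellation to take effect, and $x_1^p\,kP$ is essentially forced by the constraint $a_1\geq 2$. Everything else is routine.
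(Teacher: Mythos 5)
Your proof is correct and follows essentially the same route as the paper's: both hinge on the characteristic-$p$ cancellation $f(y^p)=py^{p-1}f(y)=0$, which makes an ideal generated by $p$-th powers of augmentation-ideal elements (the paper's $I(k\Phi(P))kP$, your $x_1^p\,kP$) stable under all derivations and hence produces a nonzero proper Lie ideal of $\Der(kP)$. The only differences are cosmetic --- you work in explicit coordinates and exhibit the Lie ideal directly rather than as the kernel of the induced map $\Der(kP)\to\Der(kP/\Phi(P))$, and you additionally spell out the degenerate cases $|P|\leq 2$, which the paper leaves implicit.
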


\begin{proof}
Suppose that $P$ is not elementary abelian; that is, its Frattini
subgroup $Q=\Phi(P)$ is nontrivial. We will show that the set of 
derivations with image contained in $I(kQ)kP=$ 
$\ker(kP\to kP/Q)$ is a nonzero Lie ideal in $\Der(kP)$, where 
$I(kQ)$ is the augmentation ideal of $kQ$. Indeed, every element in 
$Q$ is equal to $x^p$ for some $x\in$ $P$, and hence every element in 
$I(kQ)$ is a linear combination of elements of the form $(x-1)^p$, 
where $x\in$ $P$. Every derivation on $kP$ annihilates all elements of 
this form (using the fact that $k$ has characteristic $p$), and hence 
every derivation on $kP$ preserves $I(kQ)kP$. Thus there is a canonical 
Lie algebra homomorphism $\Der(kP)\to$ $\Der(kP/Q)$, which is easily 
seen to be nonzero, with nonzero kernel, and hence $\HH^1(kP)$ is not 
simple. The result follows.
\end{proof}

\begin{Remark}
Theorem \ref{onesimpleLie} implies that in fact for any finite 
$p$-group $P$ the Lie algebra $HH^1(kP)$ is simple if and only if $P$ 
is elementary abelian of order at least $3$. The special case with
$P$ abelian, as stated in \ref{LiesimpleP}, will be needed in the
proof of \ref{onesimpleLie}.
\end{Remark}

\section{Auxiliary results}

In order to exploit the hypothesis on $\HH^1$ being simple in the
statement of Theorem \ref{onesimpleLie},  we consider
Lie algebra homomorphisms into the $\HH^1$ of subalgebras and quotients.

\begin{Lemma} \label{restrictZA}
Let $A$ be a finite-dimensional $k$-algebra and $f$ a derivation on $A$.
Then $f$ sends $Z(A)$ to $Z(A)$, and the map sending $f$ to the induced
derivation on $Z(A)$ induces a Lie algebra homomorphism $\HH^1(A)\to$
$\HH^1(Z(A))$. 
\end{Lemma}

\begin{proof}
Let $z\in$ $Z(A)$. For any $a\in$ $A$ we have $az=$ $za$, hence
$f(az)=$ $f(a)z+af(z)=f(z)a+zf(a)=$ $f(za)$. Comparing
the two expressions, using $zf(a)=$ $f(a)z$, yields $af(z)=f(z)a$,
and hence $f(z)\in$ $Z(A)$. The result follows.
\end{proof}

\begin{Lemma} \label{AZAkernel}
Let $A$ be a local symmetric $k$-algebra such that $J(Z(A))A\neq$ $J(A)$.
Then the canonical Lie algebra homomorphism $\HH^1(A)\to$ $\HH^1(Z(A))$ 
is not injective. 
\end{Lemma}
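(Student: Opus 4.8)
The plan is to produce an outer derivation of $A$ that vanishes on $Z(A)$; its class in $\HH^1(A)$ will then be a nonzero element in the kernel of the canonical map. To see that this suffices, recall first that since $A$ is local and $k$ is algebraically closed we have $A/J(A)\cong k$, that $Z(A)$ is a commutative local algebra with $J(Z(A))=Z(A)\cap J(A)$ and $Z(A)=k\cdot 1\oplus J(Z(A))$, and that $\HH^1(Z(A))\cong\Der(Z(A))$ because $Z(A)$ is commutative. The homomorphism $\HH^1(A)\to\HH^1(Z(A))$ of Lemma \ref{restrictZA} sends the class of a derivation $f$ on $A$ to the restriction $f|_{Z(A)}\in\Der(Z(A))$. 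Every inner derivation $[c,-]$ of $A$ restricts to $0$ on $Z(A)$, so $\IDer(A)$ is contained in the space of derivations on $A$ vanishing on $Z(A)$; hence the kernel of $\HH^1(A)\to\HH^1(Z(A))$ is exactly the image in $\HH^1(A)$ of that space, and it is enough to exhibit a derivation on $A$ vanishing on $Z(A)$ which is not inner.

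The key point is a purely ring-theoretic claim: the hypothesis $J(Z(A))A\neq J(A)$ forces $Z(A)+J(A)^2$ to be a proper subspace of $A$, equivalently $J(Z(A))+J(A)^2\neq J(A)$ (using $A=k\cdot 1\oplus J(A)$ and $Z(A)=k\cdot 1\oplus J(Z(A))$). I would prove the contrapositive. Assume $J(Z(A))+J(A)^2=J(A)$. Substituting this identity into $J(A)^m=J(A)^{m-1}\cdot J(A)$ and using that $J(Z(A))$ is central gives $J(A)^m\subseteq J(Z(A))A+J(A)^{m+1}$ for every $m\geq 1$. Feeding this back into $J(A)=J(Z(A))+J(A)^2$ and inducting on $n$ yields $J(A)\subseteq J(Z(A))A+J(A)^n$ for all $n$; since $J(A)$ is nilpotent, taking $n$ with $J(A)^n=0$ gives $J(A)\subseteq J(Z(A))A$, and the reverse inclusion is clear, so $J(Z(A))A=J(A)$, a contradiction. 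This short iteration is the crux of the argument; everything else is formal.

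Given the claim, choose a nonzero $k$-linear form $\lambda\colon A\to k$ vanishing on $Z(A)+J(A)^2$ — possible precisely because this subspace is proper — and a nonzero element $s\in\soc(A)$, and define $f\colon A\to A$ by $f(a)=\lambda(a)s$. By construction $\Im(f)\subseteq\soc(A)$ and $1+J(A)^2\subseteq\ker(f)$ (since $\lambda(1)=0$ and $\lambda$ vanishes on $J(A)^2$), so Corollary \ref{soclederivationlocal} shows that $f$ is a derivation on $A$, and since $f\neq 0$ it is an outer derivation. Finally $Z(A)\subseteq\ker(f)$, so $f$ vanishes on $Z(A)$, and therefore the nonzero class of $f$ in $\HH^1(A)$ lies in the kernel of $\HH^1(A)\to\HH^1(Z(A))$; in particular this map is not injective. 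The main obstacle, as noted, is the ring-theoretic step relating $J(Z(A))A$ to $Z(A)+J(A)^2$; the remainder is a direct application of Corollary \ref{soclederivationlocal}.
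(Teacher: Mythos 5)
Your proof is correct and follows essentially the same route as the paper: use the hypothesis to show that $Z(A)+J(A)^2$ is proper in $A$, build a socle-valued linear map vanishing on it, and invoke Corollary \ref{soclederivationlocal} to get an outer derivation that kills $Z(A)$. The only difference is that the paper obtains the key inequality $J(Z(A))A+J(A)^2\neq J(A)$ in one line from Nakayama's lemma, whereas your iteration with powers of $J(A)$ is just a hands-on reproof of that same instance of Nakayama.
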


\begin{proof}
Since $J(Z(A))A<$ $J(A)$, it follows from Nakayama's lemma that
$J(Z(A))A+J(A)^2<J(A)$. Thus there is a nonzero linear endomorphism  $f$ 
of $A$ which vanishes on $J(Z(A))A + J(A)^2$ and on $k\cdot 1_A$, with 
image contained in $\soc(A)$. In particular, $f$ vanishes on $Z(A)=$ 
$k\cdot 1_A+J(Z(A))$. By \ref{soclederivationlocal}, the map $f$ is an
outer derivation on $A$. Thus the class of $f$ in $HH^1(A)$ is nonzero, 
and its image in $HH^1(Z(A))$ is zero, whence the result.
\end{proof}

\begin{Lemma} \label{ZAkerJA}
Let $A$ be a local symmetric $k$-algebra and let $f$ be a derivation on
$A$ such that $Z(A)\subseteq$ $\ker(f)$. Then $f(J(A))\subseteq$ $J(A)$.
\end{Lemma}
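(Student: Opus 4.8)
The plan is to exploit the nonzero central socle of a local symmetric algebra. First I would record the fact that does all the work: for a local symmetric $k$-algebra $A$, every element of $\soc(A)$ lies in $Z(A)$. Indeed, since $A$ is symmetric its left and right socles coincide (both equal $J(A)^{\perp}$ with respect to the symmetrizing form), so any $s\in\soc(A)$ satisfies $J(A)s=0=sJ(A)$; and since $A$ is local, writing an arbitrary $a\in A$ as $a=\alpha 1_A+c$ with $\alpha\in k$ and $c\in J(A)$ gives $as=\alpha s=sa$. As $A$ is a nonzero finite-dimensional algebra, $\soc(A)\neq 0$, so I may fix some $s\in\soc(A)$ with $s\neq 0$.

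Next, since $s\in Z(A)\subseteq\ker(f)$ we have $f(s)=0$. Given $x\in J(A)$, the relation $sJ(A)=0$ yields $sx=0$, and applying the derivation $f$ gives $0=f(sx)=f(s)x+sf(x)=sf(x)$. If $f(x)$ were not contained in $J(A)$, then $f(x)$ would be invertible ($A$ being local), and multiplying $sf(x)=0$ on the right by $f(x)^{-1}$ would force $s=0$, a contradiction. Hence $f(x)\in J(A)$ for all $x\in J(A)$, which is the assertion.

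The single delicate point is the first step, and it is precisely there that the symmetry hypothesis is used (locality and self-injectivity alone would not suffice): symmetry is what forces the left and right socles to agree, hence makes $\soc(A)$ central and therefore killed by $f$. Once a nonzero $s\in\soc(A)\cap Z(A)$ is in hand, the rest is the one-line computation above, so I do not anticipate any genuine obstacle beyond being careful that $\soc(A)\neq 0$ and that symmetry identifies the two socles.
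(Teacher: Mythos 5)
Your proof is correct and follows essentially the same route as the paper: both exploit that $\soc(A)\subseteq Z(A)$ for a local symmetric algebra, so $f$ kills the socle, and then differentiate the vanishing product of a socle element with an element of $J(A)$ to get $s\,f(x)=0$. The only (immaterial) difference is the last inference: the paper concludes via $J(A)=\mathrm{ann}(\soc(A))$, while you use locality to note that a non-unit annihilated against a nonzero $s$ cannot be invertible.
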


\begin{proof}
Since $A$ is local and symmetric, we have $\soc(A)\subseteq$ $Z(A)$,
and $J(A)$ is the annihilator of $\soc(A)$.
Let $x\in$ $J(A)$ and $y\in$ $\soc(A)$. Then $xy=0$, hence
$0=$ $f(xy)=$ $f(x)y+xf(y)$. Since $y\in$ $\soc(A)\subseteq$ $Z(A)$, it
follows that $f(y)=0$, hence $f(x)y=0$. This shows that $f(x)$ annihilates
$\soc(A)$, and hence that $f(x)\in$ $J(A)$.
\end{proof}

\begin{Lemma} \label{JnLemma}
Let $A$ be a finite-dimensional $k$-algebra and $J$ an ideal in $A$.

\smallskip\noindent (i)
Let $f$ be a derivation on $A$ such that $f(J)\subseteq$ $J$. Then
$f(J^n)\subseteq$ $J^n$ for any positive integer $n$.

\smallskip\noindent (ii)
Let $f$, $g$ be derivations on $A$ and let $m$, $n$ be positive 
integers such that $f(J)\subseteq$ $J^m$ and $g(J)\subseteq$ $J^n$.
Then $[f,g](J)\subseteq$ $J^{m+n-1}$.
\end{Lemma}

\begin{proof}
In order to prove (i), we
argue by induction over $n$. For $n=1$ there is nothing to prove. If
$n>1$, then $f(J^n)\subseteq$ $f(J)J^{n-1} + Jf(J^{n-1})$. Both
terms are in $J^n$, the first by the assumptions, and the second
by the induction hypothesis $f(J^{n-1})\subseteq$ $J^{n-1}$.
Let $y\in$ $J$. Then $[f,g](y)=$ $f(g(y))-g(f(y))$. We have
$g(y)\in$ $J^n$; that is, $g(y)$ is a sum of products of $n$ elements 
in $J$. Applying $f$ to any such product shows that the image is in 
$J^{m+n-1}$. A similar argument applied to $g(f(y))$ implies (ii).
\end{proof}

\begin{Proposition} \label{JAProp}
Let $A$ be a finite-dimensional $k$-algebra. For any positive integer
$m$ denote by $\Der_{(m)}(A)$ the $k$-subspace of derivations
$f$ on $A$ satisfying $f(J(A))\subseteq$ $J(A)^m$.

\smallskip\noindent (i)
For any two positive integers $m$ and $n$ we have
$[\Der_{(m)}(A),\Der_{(n)}(A)]\subseteq$ $\Der_{(m+n-1)}(A)$.

\smallskip\noindent (ii)
The space $\Der_{(1)}(A)$ is a Lie subalgebra of $\Der(A)$.

\smallskip\noindent (iii)
For any positive integer $m$, the space $\Der_{(m)}(A)$ is an ideal in 
$\Der_{(1)}(A)$.

\smallskip\noindent (iv)
The space $\Der_{(2)}(A)$ is a nilpotent Lie subalgebra of $\Der(A)$.
\end{Proposition}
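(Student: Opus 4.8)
The plan is to obtain all four statements from Lemma~\ref{JnLemma} together with the elementary fact that the commutator $[f,g]=f\circ g-g\circ f$ of two derivations $f,g$ on $A$ is again a derivation on $A$. For (i), take $f\in\Der_{(m)}(A)$ and $g\in\Der_{(n)}(A)$, so $f(J(A))\subseteq J(A)^m$ and $g(J(A))\subseteq J(A)^n$; then $[f,g]$ is a derivation, and Lemma~\ref{JnLemma}(ii) gives $[f,g](J(A))\subseteq J(A)^{m+n-1}$, which (since $m+n-1\geq1$) says exactly that $[f,g]\in\Der_{(m+n-1)}(A)$.

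Parts (ii) and (iii) are then formal. Each $\Der_{(m)}(A)$ is a $k$-subspace of $\Der(A)$, the $k$-linear maps sending $J(A)$ into $J(A)^m$ being closed under addition and scalar multiplication. The case $m=n=1$ of (i) gives $[\Der_{(1)}(A),\Der_{(1)}(A)]\subseteq\Der_{(1)}(A)$, which is (ii); and for (iii), since $J(A)^m\subseteq J(A)$ for $m\geq1$ we have $\Der_{(m)}(A)\subseteq\Der_{(1)}(A)$, while applying (i) to the pair $(1,m)$ yields $[\Der_{(1)}(A),\Der_{(m)}(A)]\subseteq\Der_{(1+m-1)}(A)=\Der_{(m)}(A)$.

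The substantive part is (iv). That $\Der_{(2)}(A)$ is a Lie subalgebra is the case $m=n=2$ of (i), since $[\Der_{(2)}(A),\Der_{(2)}(A)]\subseteq\Der_{(3)}(A)\subseteq\Der_{(2)}(A)$. For nilpotency I would estimate the lower central series $\gamma_1=\Der_{(2)}(A)$, $\gamma_{j+1}=[\Der_{(2)}(A),\gamma_j]$: induction on $j$ via (i) gives $\gamma_j\subseteq\Der_{(j+1)}(A)$ (base case the definition, inductive step $\gamma_{j+1}\subseteq[\Der_{(2)}(A),\Der_{(j+1)}(A)]\subseteq\Der_{(j+2)}(A)$). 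As $A$ is finite-dimensional, $J(A)^N=0$ for some $N$, so $\gamma_{N-1}\subseteq\Der_{(N)}(A)$, which consists of derivations annihilating $J(A)$.

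The step I expect to be the crux is concluding that such a derivation is zero, so that $\gamma_{N-1}=0$ and $\Der_{(2)}(A)$ is nilpotent. This is transparent when $A$ is local (and $k$ algebraically closed), for then $A=k\cdot1_A\oplus J(A)$ and a derivation $f$ has $f(1_A)=0$, so $f(J(A))=0$ forces $f=0$ --- and this is exactly the case needed in Theorem~\ref{onesimpleLie}, where $A$ is the basic algebra of a block with a single isomorphism class of simple modules. (More generally it suffices that $A/J(A)$ be commutative, hence a product of copies of $k$ with no nonzero derivations; then every derivation of $A$ maps $A$ into $J(A)$, which sharpens the estimate above to $\gamma_j\subseteq\{f\in\Der(A):f(A)\subseteq J(A)^j\}$ and yields $\gamma_N=0$. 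Some such hypothesis is needed for (iv): for $A=M_2(k)$ one has $J(A)=0$, hence $\Der_{(2)}(A)=\Der(A)$, which is nonzero and not nilpotent.)
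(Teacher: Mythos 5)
Your treatment of (i)--(iii) coincides with the paper's: (i) is Lemma~\ref{JnLemma}(ii) applied to the ideal $J=J(A)$, and (ii) and (iii) are formal consequences of (i). For (iv) the paper offers only the one-line argument that it ``follows from (i) and the fact that $J(A)$ is nilpotent'', which is exactly your lower central series estimate $\gamma_j\subseteq\Der_{(j+1)}(A)$; and you are right that this argument, taken literally, stops short: if $J(A)^N=0$ it shows only that $\gamma_{N-1}$ consists of derivations annihilating $J(A)$, and such derivations need not vanish. Your further observation that (iv) is in fact false as stated is correct and identifies a (minor) defect of the Proposition rather than of your proof: for $A=M_2(k)$ one has $J(A)=0$, so $\Der_{(2)}(A)=\Der(A)$, which consists entirely of inner derivations and is isomorphic as a Lie algebra to $M_2(k)/Z(M_2(k))$; this is not nilpotent in any characteristic (its lower central series stabilizes at a nonzero term). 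The repair you propose is the right one and covers the only use made of (iv) in the paper: in the proof of Theorem~\ref{onesimpleLie} the algebra $A$ is local, so $A=k\cdot 1_A\oplus J(A)$, a derivation kills $1_A$, and hence an element of $\Der_{(N)}(A)=\{f:f(J(A))=0\}$ is zero, giving $\gamma_{N-1}=0$.

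One small imprecision in your parenthetical generalization: when $A/J(A)$ is commutative it is not true that \emph{every} derivation of $A$ maps $A$ into $J(A)$, because in characteristic $p$ an arbitrary derivation need not preserve $J(A)$ at all; for instance $d/dx$ on $k[x]/(x^p)$ sends $x$ to $1$. The assertion is correct for derivations lying in $\Der_{(1)}(A)$, since these induce a derivation of $A/J(A)\cong k\times\cdots\times k$, which must be zero. As $\Der_{(2)}(A)$ and all of its iterated brackets lie in $\Der_{(1)}(A)$, your sharpened estimate $\gamma_j\subseteq\{f\in\Der(A):f(A)\subseteq J(A)^j\}$ and the conclusion $\gamma_N=0$ are unaffected; just state the claim for elements of $\Der_{(1)}(A)$ rather than for all derivations.
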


\begin{proof}
Statement (i) follows from \ref{JnLemma} (ii). The statements (ii) and
(iii) are immediate consequences of (i). Statement (iii) follows from (i) 
and the fact that $J(A)$ is nilpotent.
\end{proof} 

\section{Proof of Theorem \ref{onesimpleLie}}

Let $G$ be a finite group and $B$ a block of $kG$. Suppose that $B$
has a single isomorphism class of simple modules. If $B$ is nilpotent 
and $P$ a defect group of $B$, then by \cite{Punil}, $B$ is Morita 
equivalent to $kP$, and hence there is a Lie algebra isomorphism 
$\HH^1(B)\cong$ $\HH^1(kP)$. Thus if $B$ is nilpotent with an
elementary abelian defect group $P$ of order at least $3$, then 
$\HH^1(B)$ is a simple Lie algebra by \ref{kPLiesimple}. 

Suppose conversely that $\HH^1(B)$ is a simple Lie algebra. If $J(B)=$ 
$J(Z(B))B$, then $B$ is nilpotent with an abelian defect group $P$ by
\ref{abeliandefectnilpotent}. As before, we have $\HH^1(B)\cong$ 
$\HH^1(kP)$, and hence \ref{LiesimpleP} implies that $P$ is elementary 
abelian of order at least $3$. 

Suppose that $J(Z(B))B\neq$ $J(B)$. Let $A$ be a basic algebra of $B$.
Then $J(Z(A))A\neq$ $J(A)$. Moreover, $A$ is local symmetric, since 
$B$ has a single isomorphism class of simple modules. Thus
$\soc(A)$ is the unique minimal ideal of $A$. We have $J(A)^2\neq$ 
$\{0\}$. Indeed, if $J(A)^2=\{0\}$, then $\soc(A)$ contains $J(A)$,
and hence $J(A)$ has dimension 1, implying that $A$ has dimension $2$.
In that case $B$ is a block with defect group of order $2$. But
then $\HH^1(A)\cong$ $\HH^1(kC_2)$ is not simple, a contradiction.
Thus $J(A)^2\neq$ $\{0\}$, and hence $\soc(A)\subseteq$ $J(A)^2$.
By \ref{AZAkernel}, the canonical Lie algebra homomorphism $\HH^1(A)\to$
$\HH^1(Z(A))$ is not injective. Since $\HH^1(A)$ is a simple Lie 
algebra, it follows that this homomorphism is zero. In other words, 
every derivation on $A$ has $Z(A)$ in its kernel. It follows from
\ref{ZAkerJA} that every derivation on $A$ sends $J(A)$ to $J(A)$.
Thus, by \ref{JnLemma}, every derivation on $A$ sends $J(A)^2$ to
$J(A)^2$. This implies that the canonical surjection $A\to$
$A/J(A)^2$ induces a Lie algebra homomorphism $\HH^1(A)\to$
$\HH^1(A/J(A)^2)$. Note that the algebra $A/J(A)^2$ is commutative
since $A$ is local.
Since $J(A)^2$ contains $\soc(A)$, it follows that the kernel of 
the canonical map $\HH^1(A)\to$ $\HH^1(A/J(A)^2)$ contains the
classes of all derivations with image in $\soc(A)$. Since there
are outer derivations with this property, it follows from the
simplicity of $\HH^1(A)$ that the 
canonical map $\HH^1(A)\to$ $\HH^1(A/J(A)^2)$ is zero. Using
that $A/J(A)^2$ is commutative, this implies that every derivation
on $A$ has image in $J(A)^2$. But then \ref{JAProp} implies that
$\Der(A)=$ $\Der_{(2)}(A)$ is a nilpotent Lie algebra. Thus $\HH^1(A)$
is nilpotent, contradicting the simplicity of $\HH^1(A)$. 
The proof of Theorem \ref{onesimpleLie} is complete.

\begin{proof}[{Proof of Theorem \ref{onetrivialLie}}]
Denote by $A$ a basic algebra of $B$. Since $B$ has a unique 
isomorphism class of simple modules and a nontrivial defect group, it 
follows that $A$ is a local symmetric algebra of dimension at least 
$2$. By \ref{soclederivationlocal} we have $\dim_k(\HH^1(A)))\geq$
$\dim_k(J(A)/J(A)^2)$. Thus $\dim_k(\HH^1(A))\geq$ $1$.
Moreover, if $\dim_k(\HH^1(A))=1$, then $\dim_k(J(A)/J(A)^2)=1$, and
hence $A$ is a uniserial algebra. In that case $B$ is a block with a 
cyclic defect group $P$ and a unique isomorphism class of simple 
modules, and hence $B$ is a nilpotent block.
Thus $A\cong$ $kP$. We have $\dim_k(\HH^1(kP))=$ $|P|$, a contradiction.
The result follows.
\end{proof}

\begin{Remark}
All finite-dimensional algebras in this paper are split thanks to the 
assumption that $k$ is algebraically closed. It is not hard to see that 
one could replace this by an assumption requiring $k$ to be a splitting 
field for the relevant algebras. The statements \ref{restrictZA} and 
\ref{JnLemma} do not require any hypothesis on $k$.
\end{Remark}

\bigskip\bigskip

\noindent\smallskip
{\it Acknowledgement.} The present paper was partially funded by
EPSRC grant EP/M02525X/1 of the first author.


\end{document}